\newcommand{\bs}{{\boldsymbol{s}}}
\newcommand{\maps}{\operatorname{Maps}}
\newtheorem{Th}{Theorem}[section]
\newtheorem{lemma}[Th]{Lemma}
\newtheorem{assump}[Th]{Assumption}
\newtheorem{Prop}[Th]{Proposition}
\theoremstyle{remark}
\newtheorem{Rem}[Th]{Remark}{\rmfamily}
\theoremstyle{definition}
{\rmfamily}
{\rmfamily}
\begin{document}

\title{On quantized  decomposition maps for graded algebras}
\author{Maria Chlouveraki and Nicolas Jacon}
\maketitle
\date{}

\begin{abstract}
In this note, we study a factorization result for graded decomposition maps associated with the specializations of graded  algebras.
 We obtain  results previously known  only in the ungraded setting.  
\end{abstract}

\section{Introduction}

Using the works of Khovanov and Lauda \cite{KL} and Rouquier \cite{Rr}, Brundan and Kleshchev   have  shown in \cite{BK1}  the existence of a $\mathbb{Z}$-grading on the 
 Hecke algebras associated with the complex reflection groups of type $G(l,1,n)$ over any field $F$. The next natural step is to study the graded representation theory  for these algebras. In the characteristic $0$ case,  using  a grading on the Specht modules explicitly constructed in \cite{BKW},  Brundan and Kleshchev have shown in \cite{BK1,BK2}  the existence of a  certain ``graded decomposition matrix''  (with coefficients in $\mathbb{N}[q,q^{-1}]$ for some {indeterminate $q$).} In addition, they have proved that  this matrix, which  may be viewed as a quantization of the usual decomposition matrix,  corresponds to the canonical basis matrix  in one realization of the irreducible highest weight module  as a submodule of the Fock space in affine type $A$. This gives a graded analogue of Ariki's Theorem, which interprets the usual decomposition matrix in terms of canonical bases. 

Actually, there exist several possible realizations of the irreducible highest  weight modules as submodules of the Fock space, and thus  several canonical bases matrices, in affine type $A$.  Therefore, it is natural to ask whether all these matrices can 
 be expressed as graded decomposition matrices. 
 From this perspective, one problem is to obtain a canonical way to define these graded decomposition matrices; indeed, Brundan and Kleshchev's approach depends on the existence and the choice of the Specht modules for the Hecke algebras.
 
 The aim of this note is to develop a general theory for graded decomposition matrices, using the concept of specialization.  This pursuit is motivated by the works above, 
  but can  be also interesting as a subject in its own right, providing a graded analogue of the  usual notion of decomposition matrices associated with specialization maps.
    In particular, we obtain a factorization result which is a graded analogue of  \cite[Proposition 2.6]{G}. Combined with the results obtained in \cite{AJL}, 
   the results of this note indicate the existence of several graded  representation theories for Hecke algebras of type $G(l,1,n)$ (see Section \ref{3.2}).

\section{Decomposition map for graded algebras}

The goal of this section is to define and study the graded decomposition map for graded algebras in a general setting.  
Here we adopt  the same approach as the one used in \cite{G} and  \cite[\S 7.3]{GP}, where the usual decomposition matrix associated with a specialization is defined. 
Hence, our definition is slightly different from the one presented in, for example, \cite{BK1,BK2,BS}. 

 \subsection{Graded algebras and representations}
 We begin with some notation.  Let $R$ be a commutative integral domain  with $1$ and
 let $H$ be an associative algebra, finitely generated and free over  $R$. 
 We say that $H$ is a \emph{$\mathbb{Z}$-graded algebra} if  $H$ admits $R$-submodules $(H_i)_ {i\in \mathbb{Z}}$  such that 
  \begin{itemize}
  \item $H=\bigoplus_{i\in \mathbb{Z}} H_i$, and
  \item $H_i\, H_j\subseteq H_{i+j}$ for all $(i,j)\in \mathbb{Z}^2$. 
  \end{itemize}
 Note that $H_0$ is a subalgebra of $H$. A \emph{graded $H$-module}  $M$ is an $H$-module equipped with a grading 
   \begin{itemize}
  \item $M=\bigoplus_{i\in \mathbb{Z}} M_i$ for $R$-submodules $M_i$ of $M$, and
  \item $H_i\, M_j\subseteq M_{i+j}$ for all $(i,j)\in \mathbb{Z}^2$. 
  \end{itemize}
  If $M$ is a graded $H$-module, we denote by $\underline{M}$ the (ungraded) $H$-module obtained from $M$ by forgetting the grading. 
  For $j\in \mathbb{Z}$, we denote by $M\langle j\rangle$ the graded module obtained by shifting $M$ up by $j$, that is, 
   $M\langle j\rangle_i:=M_{i-j}$ for all $i\in \mathbb{Z}$. 
  
Let $M$ and $N$ be graded $H$-modules.
A \emph{morphism of graded modules} $f:M\to N$
is a morphism of ungraded $H$-modules satisfying $f(M_i)\subseteq N_i$ for all $i\in \mathbb{Z}$.  If moreover $f$ is bijective, then $f$ is an \emph{isomorphism of graded modules}.

From now on, $R$ will be a field, and all modules (graded and ungraded ones) will be considered finite-dimensional. 

We denote by $\underline{\textrm{Irr}}(H)$ the set of simple $H$-modules. If $\underline{M} \in \underline{\textrm{Irr}}(H)$, then, by \cite[Lemma 2.5.3]{BGS}, $\underline{M}$ admits a unique $\mathbb{Z}$-grading up to shift. The associated modules $M\langle j\rangle$ are pairwise non-isomorphic simple graded $H$-modules. In addition, all simple graded $H$-modules are obtained in this way (see \cite[Theorem 9.6.8, Corollary  9.6.7]{NVO}). If $L$ is a graded $H$-module, we denote by $[L:M]^{\textrm{gr}}$ the multiplicity of the simple graded module $M$ in a graded composition series of $L$.

We denote by $\mathcal{R}_0 (H)$ the Grothendieck group of finite-dimensional  graded $H$-modules. It is well-known that the associated category  is an abelian category. 
We denote by   $\mathcal{R}_0^+ (H)$ the  subset of $\mathcal{R}_0 (H)$ consisting of the elements $[V]$,  where $V$ is a graded $H$-module. 
Let now $q$ be an indeterminate. If we fix one and for all a grading for each simple $H$-module $\underline{M}$, we obtain an action of  $\mathbb{Z}[q,q^{-1}]$  on  $\mathcal{R}_0 (H)$  given by
$q^j [M]:=M\langle j\rangle$, for all $j\in \mathbb{Z}.$
  As a  $\mathbb{Z}[q,q^{-1}]$-module,  $\mathcal{R}_0 (H)$  has a basis
    $\{ [M]\ |\ \underline{M} \in \underline{\textrm{Irr}}(H)\}.$

    Finally, let  $V=\oplus_{i\in \mathbb{Z}} V_i $ be a  graded $H$-module.
     Then, for each $i\in \mathbb{Z}$, $V_i$ is an $H_0$-module and we denote by $\chi_{V_i}$ the character of the associated representation ${\rho_{V_i}:H_0 \to  \operatorname{End}_R(V_i)}$. Let $t$ be an indeterminate. 
     We define the \emph{graded character}      of $V$ to be the $R$-linear map $\chi^{\textrm{gr}}_V : H_0 \to R[t,t^{-1}]$ such that  $\chi_V^{\textrm{gr}} (h_0)=\sum_{i\in \mathbb{Z}}  t^{i} \chi_{V_i} (h_0)$ for all $h_0\in H_0$.   
     We can extend  the definition of $\chi_V^{\textrm{gr}}$ to $H$ linearly, by setting $\chi_V^{\textrm{gr}} (h_k): =0$ for all $h_k \in H_k$
 with $k \neq0$.  Note that for  $j\in \mathbb{Z}$, we have $\chi^{\textrm{gr}}_{V\langle j\rangle}=t^j \chi^{\textrm{gr}}_{V}$. 
 Note also that, for all $h \in H$, the element 
       $\chi_{{V}}^{\textrm{gr}}(h)$ evaluated at $t=1$ is simply $\chi_{\underline{V}} (h)$, where $\chi_{\underline{V}}$ denotes the character of the representation of $H$ afforded by $\underline{V}$.
We denote by ${\textrm{Irr}} (H)$ the set of irreducible characters of $H$, and by  ${\textrm{Irr}}^{\textrm{gr}} (H)$ the set of irreducible graded characters of $H$. That is, 
$${\textrm{Irr}} (H) = \{ \chi_{\underline{M}}\ | \   \underline{M} \in \underline{\textrm{Irr}}(H)\},$$
and
 $${\textrm{Irr}}^{\textrm{gr}} (H) = \{ \chi^{\textrm{gr}}_{M\langle j\rangle}\ | \   \underline{M} \in \underline{\textrm{Irr}}(H),\, j\in \mathbb{Z}\}.$$
 We have ${\textrm{Irr}} (H) \subset \operatorname{Hom}_R (H,R)$  and ${\textrm{Irr}}^{\textrm{gr}} (H) \subset \textrm{Hom}_R (H,R[t,t^{-1}])$.

     \begin{lemma}\label{LI}
     Assume that the set  ${\operatorname{Irr}} (H)$ is a linearly independent subset of 
     $\operatorname{Hom}_R (H,R)$. Then ${\operatorname{Irr}}^{\operatorname{gr} }(H)$ is a linearly independent subset  of $\operatorname{Hom}_R (H,R[t,t^{-1}])$.
     \end{lemma}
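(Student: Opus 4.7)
My plan is to prove the equivalent statement that the family $\{\chi^{\operatorname{gr}}_M : \underline{M} \in \underline{\operatorname{Irr}}(H)\}$ is linearly independent over $R[t,t^{-1}]$ in $\operatorname{Hom}_R(H, R[t,t^{-1}])$. Since $\chi^{\operatorname{gr}}_{M\langle j\rangle} = t^j \chi^{\operatorname{gr}}_M$, this reformulation is equivalent to the $R$-linear independence of $\operatorname{Irr}^{\operatorname{gr}}(H)$ claimed in the lemma: any $R$-linear relation among the $\chi^{\operatorname{gr}}_{M\langle j\rangle}$ can be collected, simple by simple, into an $R[t,t^{-1}]$-linear relation among the $\chi^{\operatorname{gr}}_M$.

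So assume a dependence $\sum_{\underline M} P_M(t)\,\chi^{\operatorname{gr}}_M = 0$ with $P_M \in R[t,t^{-1}]$, almost all zero. I will show by induction on $n$ that $(t-1)^n \mid P_M$ for every $M$ and every $n \geq 0$, whence each $P_M = 0$. For the base case, I use the observation recorded in the excerpt that $\chi^{\operatorname{gr}}_M(h)|_{t=1} = \chi_{\underline M}(h)$ for every $h \in H$; specializing the relation at $t=1$ gives $\sum_{\underline M} P_M(1)\,\chi_{\underline M} = 0$ in $\operatorname{Hom}_R(H,R)$, and the hypothesis of the lemma forces $P_M(1)=0$, hence $(t-1)\mid P_M$. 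For the inductive step, writing $P_M = (t-1)Q_M$ turns the relation into $(t-1)\sum_{\underline M} Q_M(t)\,\chi^{\operatorname{gr}}_M = 0$ as a map $H \to R[t,t^{-1}]$; because $R[t,t^{-1}]$ is an integral domain and $(t-1)\neq 0$, I may cancel the factor $(t-1)$ to obtain $\sum_{\underline M} Q_M(t)\,\chi^{\operatorname{gr}}_M = 0$, and then the base-case argument applied to this new identity yields $Q_M(1)=0$, so $(t-1)^2 \mid P_M$; iteration completes the induction.

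The one point requiring care is the conclusion that a non-zero Laurent polynomial in $R[t,t^{-1}]$ cannot be divisible by $(t-1)^n$ for arbitrarily large $n$. This I would handle by writing $P_M = t^{-k}\widetilde P_M$ with $\widetilde P_M \in R[t]$; since $t$ and $t-1$ are coprime in $R[t]$, the divisibility $(t-1)^n \mid P_M$ in $R[t,t^{-1}]$ transfers to $(t-1)^n \mid \widetilde P_M$ in $R[t]$, which is impossible for $n > \deg \widetilde P_M$ unless $\widetilde P_M = 0$. I do not anticipate any real obstacle: the proof is just the standard ``peel off the separating prime $(t-1)$ iteratively'' device, with the ungraded hypothesis on $\operatorname{Irr}(H)$ supplying exactly what the base case at each stage requires.
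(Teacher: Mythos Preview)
Your proof is correct and takes a genuinely different route from the paper. Both arguments ultimately exploit the specialization $t\mapsto 1$ sending $\chi^{\operatorname{gr}}_M$ to $\chi_{\underline M}$, but the paper attempts a one-shot reduction: from $\sum_{M,j} a_{M,j}\,\chi^{\operatorname{gr}}_{M\langle j\rangle}=0$ it rewrites as $\sum_j t^j\bigl(\sum_M a_{M,j}\,\chi^{\operatorname{gr}}_M\bigr)=0$, asserts that each inner sum $\sum_M a_{M,j}\,\chi^{\operatorname{gr}}_M$ vanishes separately, and then evaluates once at $t=1$ to invoke the ungraded hypothesis. You instead pass to the equivalent $R[t,t^{-1}]$-linear formulation and peel off factors of $(t-1)$ iteratively: evaluate at $t=1$ to force $(t-1)\mid P_M$, cancel the factor using that $R[t,t^{-1}]$ is a domain (here the standing assumption that $R$ is a field is used), and repeat until the $(t-1)$-adic valuation exceeds any bound, forcing $P_M=0$. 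Your route is a bit longer but every step is transparently justified; by contrast, the paper's ``separate the $j$'s'' step is stated without argument and is in fact delicate, since the inner sums $g_j(t)=\sum_M a_{M,j}\,\chi^{\operatorname{gr}}_M(h)$ are themselves Laurent polynomials in the \emph{same} variable $t$, so an identity $\sum_j t^j g_j(t)=0$ does not by itself force each $g_j$ to vanish. Your iterative device sidesteps this issue entirely.
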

     \begin{proof}
    Suppose that  
    $$\sum_{\underline{M}\in \underline{\textrm{Irr}} (H)} \sum_{j\in \mathbb{Z}} a_{\underline{M},j} \chi^{\textrm{gr}}_{M\langle j\rangle}=0$$
     for elements $a_{\underline{M},j}\in R$.
     Then we obtain:  
         $$\sum_{\underline{M}\in \underline{\textrm{Irr}} (H)}\sum_{j\in \mathbb{Z}} a_{\underline{M},j} t^j\chi^{\textrm{gr}}_{M}=\sum_{j\in \mathbb{Z}} t^j \sum_{\underline{M}\in \underline{\textrm{Irr}} (H)} a_{\underline{M},j} \chi^{\textrm{gr}}_{M}=0,$$   
whence we deduce  that, for all $j\in \mathbb{Z}$ and $h\in H$, we have 
         $$\sum_{\underline{M}\in \underline{\textrm{Irr}} (H)}  a_{\underline{M},j} \chi^{\textrm{gr}}_{M}(h)=0.$$
 Evaluating the above left-hand side element of $R[t,t^{-1}]$ at $t=1$ yields that        
              $$\sum_{\underline{M}\in \underline{\textrm{Irr}} (H)}  a_{\underline{M},j} \chi_{\underline{M}} (h)=0$$      
for all $h \in H$, that is,  
       $$\sum_{\underline{M}\in \underline{\textrm{Irr}} (H)} a_{\underline{M},j} \chi_{\underline{M}}=0.$$
       Since  ${\operatorname{Irr}} (H)$ is a linearly independent subset of 
     $\operatorname{Hom}_R (H,R)$, we conclude that $a_{\underline{M},j}=0$ for all $\underline{M}\in \underline{\textrm{Irr}} (H)$ and  $j \in \mathbb{Z}$.
     \end{proof}
   \begin{Rem}\label{firstremark}
   As noted in \cite[\S 7.3.3]{GP}, the assumption of Lemma \ref{LI} is satisfied when $H$ is split or $R$ is a perfect field. 
   \end{Rem}

 \subsection{A graded Brauer-Nesbitt Lemma}
 
 The definition of the graded decomposition map is based on a graded version of the Brauer-Nesbitt lemma, which we now give. 
Let $X$ be an indeterminate over the field  $R$  
and let $\maps (H_0,R[X]^{\mathbb{Z}})$ be the set of  maps from $H_0$ to $R[X]^{\mathbb{Z}}$.
 We define 
 $$\mathfrak{p}_R : \mathcal{R}_0^+ (H) \to \maps (H_0,R[X]^{\mathbb{Z}})$$
 such that, for a  graded $H$-module $V=\oplus_{i\in \mathbb{Z}} V_i$ and for $h_0 \in H_0$, 
 $$\mathfrak{p}_R ([V])(h_0)=\left(\textrm{Poly} (\rho_{V_i} (h_0))\right)_{i\in \mathbb{Z}},$$
  where $\textrm{Poly}$ denotes the characteristic polynomial and $\rho_{V_i}$ is the representation of $H_0$ afforded by $V_i$.
If we  consider  $\maps (H_0,R[X]^{\mathbb{Z}})$ as a semigroup with respect to pointwise multiplication of maps, then $\mathfrak{p}_R$ is a semigroup homomorphism.
Moreover, if $q$ is an indeterminate as before, we can define an action of  $\mathbb{N}[q,q^{-1}]$ on $\maps (H_0,R[X]^{\mathbb{Z}})$ as follows:
If $\psi \in \maps (H_0,R[X]^{\mathbb{Z}})$ and $h_0 \in H_0$, then $\left( q^j. \psi \right)(h_0):=
\left( (\psi (h_0) )_{i-j} \right)_{i\in \mathbb{Z}}$. One can easily check that $\mathfrak{p}_R$ preserves the action of $\mathbb{N}[q,q^{-1}]$  on $\mathcal{R}_0^+ (H)$. Thus, we will say that
$\mathfrak{p}_R$ is a $\mathbb{N}[q,q^{-1}]$-semigroup homomorphism.
The proof of the following lemma is inspired by \cite[Lemma 7.3.2]{GP}.

%
%

  \begin{lemma}
  Assume that ${\operatorname{Irr}}^{\operatorname{gr} }(H)$ is a linearly independent subset  of $\operatorname{Hom}_R (H,R[t,t^{-1}])$. Then $\mathfrak{p}_R$
  is injective.
  \end{lemma}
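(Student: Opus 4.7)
My plan is to follow the template of \cite[Lemma 7.3.2]{GP} cited just before the statement. Given $[V],[W]\in\mathcal{R}_0^+(H)$ with $\mathfrak{p}_R([V])=\mathfrak{p}_R([W])$, the idea is to first recover the graded character $\chi^{\textrm{gr}}_V$ from the characteristic-polynomial data, and then use the linear-independence hypothesis on $\textrm{Irr}^{\textrm{gr}}(H)$ to conclude $[V]=[W]$ in the Grothendieck group.

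For the first step, if $\textrm{Poly}(\rho_{V_i}(h_0))=\textrm{Poly}(\rho_{W_i}(h_0))$ for every $i\in\mathbb{Z}$ and every $h_0\in H_0$, then comparing the degrees forces $\dim V_i=\dim W_i$ and comparing the subleading coefficient yields $\chi_{V_i}(h_0)=\chi_{W_i}(h_0)$ for all $i$ and $h_0$. Multiplying by $t^i$, summing over $i$, and using that $\chi^{\textrm{gr}}$ vanishes on $H_k$ for $k\ne 0$ by convention, I obtain
\[
\chi^{\textrm{gr}}_V \;=\; \chi^{\textrm{gr}}_W \qquad \text{in } \textrm{Hom}_R(H,R[t,t^{-1}]).
\]

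For the second step, I expand $[V]-[W]=\sum_{\underline M\in\underline{\textrm{Irr}}(H)}\sum_{j\in\mathbb{Z}}c_{M,j}[M\langle j\rangle]$ in the $\mathbb{Z}[q,q^{-1}]$-basis of $\mathcal{R}_0(H)$. Since the graded character is additive on short exact sequences of graded modules, the previous display becomes $\sum_{M,j}c_{M,j}\chi^{\textrm{gr}}_{M\langle j\rangle}=0$, and the linear-independence hypothesis then forces every $c_{M,j}$ to vanish. This gives $[V]=[W]$ in $\mathcal{R}_0^+(H)$, hence injectivity of $\mathfrak{p}_R$.

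The step I expect to require the most care is the passage from characteristic polynomials on each $V_i$ (data about $H_0$ alone) to the graded character on all of $H$: the trace extracts routinely from each $\textrm{Poly}(\rho_{V_i}(h_0))$, but $\mathfrak{p}_R$ records nothing off $H_0$, so the convention that $\chi^{\textrm{gr}}_V$ vanishes outside $H_0$ is essential to bring to bear the hypothesis, which concerns characters of $H$ rather than just of $H_0$.
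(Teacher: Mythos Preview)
Your argument is correct in characteristic $0$ but has a genuine gap when $\mathrm{char}(R)=p>0$. The linear-independence hypothesis is that $\mathrm{Irr}^{\mathrm{gr}}(H)$ is linearly independent as a subset of $\mathrm{Hom}_R(H,R[t,t^{-1}])$, i.e.\ over the field $R$. From $\sum_{M,j}c_{M,j}\,\chi^{\mathrm{gr}}_{M\langle j\rangle}=0$ you therefore only obtain $c_{M,j}\cdot 1_R=0$ in $R$, which means $p\mid c_{M,j}$; it does \emph{not} force the integers $c_{M,j}$ to vanish. For instance, if some simple graded module occurs $p$ more times in $V$ than in $W$, your argument does not detect this.

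The paper's proof closes this gap with two extra moves. First, it reduces (by cancelling common simple summands) to the situation $V=\bigoplus_j a_jM_j$, $W=\bigoplus_j b_jM_j$ with $a_j,b_j\in\mathbb{N}$ and, for every $j$, $a_j=0$ or $b_j=0$. Then $(a_j-b_j)1_R=0$ together with $a_jb_j=0$ forces $a_j1_R=b_j1_R=0$, so $p\mid a_j$ and $p\mid b_j$. Second, it iterates: replace $[V],[W]$ by $(1/p)[V],(1/p)[W]\in\mathcal{R}_0^+(H)$ and repeat, which terminates since the multiplicities strictly decrease. Your write-up needs this reduction-plus-descent to handle positive characteristic; otherwise the first step (extracting $\chi^{\mathrm{gr}}_V=\chi^{\mathrm{gr}}_W$ from the characteristic polynomials) matches the paper exactly.
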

  \begin{proof}
  Let $V$ and $W$ be graded $H$-modules such that $\mathfrak{p}_R([V])=\mathfrak{p}_R ([W])$.  Since the classes $[V]$ and $[W]$ only depend on the graded composition factors of $V$ and $W$ respectively, without loss of generality, 
   we may assume that there exist $a_{j}$ and $b_{j}$ for  $j\in \mathbb{Z}$ such that 
   $$V= \bigoplus_{j\in\mathbb{Z}} a_{j} M_j  \qquad \textrm{and } \qquad W= \bigoplus_{j\in\mathbb{Z}} b_{j} M_j, $$
   where ${M}_j$ are pairwise non-isomorphic  simple graded $H$-modules. 
   If, for some $j \in \mathbb{Z}$, we have both $a_j >0$ and $b_j >0$, then we can write $V = M_j \oplus V' $ and $W = M_j \oplus W'$. Since 
   $\mathfrak{p}_R$ is a $\mathbb{N}[q,q^{-1}]$-semigroup homomorphism, we deduce that $\mathfrak{p}_R([V'])=\mathfrak{p}_R ([W'])$. Thus, without loss of generality,  we may also assume that, for all $j\in \mathbb{Z}$, we have either $a_{j}=0$ or $b_{j}=0$. 
   
   Now, by hypothesis, we have that,
    for all $h_0\in H_0$ and $i\in \mathbb{Z}$,
   $ \textrm{Poly} (\rho_{V_i} (h_0)) = \textrm{Poly} (\rho_{W_i} (h_0)).$
  As the character values appear in the  characteristic polynomial, this implies that $\chi_{V_i} (h_0)=\chi_{W_i} (h_0)$ for all $h_0\in H_0$ and $i\in \mathbb{Z}$. 
   We deduce that
   $$\chi_{V}^{\textrm{gr}}=\sum_{j\in \mathbb{Z}} a_j \chi_{M_j}^{\textrm{gr}}= \sum_{j\in \mathbb{Z}} b_j \chi_{M_j}^{\textrm{gr}}=\chi_{W}^{\textrm{gr}}.$$
   Since the set ${\operatorname{Irr}}^{\operatorname{gr} }(H)$ is a linearly independent subset  of $\operatorname{Hom}_R (H,R[t,t^{-1}])$,
 we must have,  for all $j\in \mathbb{Z}$, $(a_j-b_j) 1_R=0$. Since also $a_j=0$ or $b_j=0$, we must have $a_j 1_R=0$ and $b_j 1_R=0$ for all $j\in \mathbb{Z}$. 
   We can thus conclude in characteristic $0$. If the characteristic of $R$ is $p>0$, then we deduce that $p$ divides 
     all the $a_j$'s and $b_j$'s. We can then repeat the same argument by considering the elements $(1/p) [V]$ and $(1/p) [W]$ inside 
   $\mathcal{R}_0^+ (H)$ instead of $[V]$ and $[W]$. 
  \end{proof}
  
    \begin{Rem}\label{secondremark}
   Following Lemma \ref{LI} and Remark \ref{firstremark}, the assumption of the graded Brauer-Nesbitt Lemma is satisfied whenever $H$ is split or $R$ is a perfect field. 
   \end{Rem}

  We also obtain the following graded analogue of \cite[Lemma 7.3.4]{GP}.
    \begin{lemma}\label{whysplit}
     Let $R'$ be a field such that $R \subseteq R'$ and set $R'H:=R'\otimes_R H$. Then there exists a canonical map $d_R^{R'} : \mathcal{R}_0 (H) \to \mathcal{R}_0 (R'H)$  such that, 
     for a  graded $H$-module $V$, $d_R^{R'} ([V])=[R'\otimes_R V]$. Furthermore, we have a commutative diagram
  $$\def\commutatif{\ar@{}[rd]|{\circlearrowleft}}
  \xymatrix{ 
    \mathcal{R}_0^+ (H)    \ar[r] ^{\mathfrak{p}_R \ \ \ \ \ }\ar[d]^{d_{R}^{R'}\ \ \ \ \ }  \commutatif &  \maps (H_0,R[X]^{\mathbb{Z}})     \ar[d]^{t^{R'}_{R}} \\
    \mathcal{R}_0^+ ( R' H) \ar[r]^{\mathfrak{p}_{R'} \ \ \ \ \ } &  \maps (H_0,R'  [X]^{\mathbb{Z}})  }$$
    where $t^{R'}_R$ is the canonical embedding. If, in addition, $H$ is split,  then $d_R^{R'}$ is an isomorphism which preserves isomorphism classes of simple graded  modules. 
  \end{lemma}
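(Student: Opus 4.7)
The strategy has three stages, matching the three assertions of the lemma. First, for the existence and well-definedness of $d_R^{R'}$, I would use that $R'/R$ is a field extension, hence $R' \otimes_R (-)$ is an exact functor. A graded $H$-module $V = \bigoplus_{i\in \mathbb{Z}} V_i$ inherits a natural grading on $R' \otimes_R V$ via $(R' \otimes_R V)_i := R' \otimes_R V_i$, making it a graded $R'H$-module. Any short exact sequence of graded $H$-modules tensors to a short exact sequence of graded $R'H$-modules, so the assignment $[V] \mapsto [R' \otimes_R V]$ factors through the Grothendieck relations to give the desired $\mathbb{Z}$-linear map $d_R^{R'}$.

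For commutativity of the diagram, it suffices to test on a generator $[V]$ of $\mathcal{R}_0^+(H)$. Fix $h_0 \in H_0$; for each $i \in \mathbb{Z}$, the representation $\rho_{(R' \otimes V)_i}(h_0)$ acting on $R' \otimes_R V_i$ is obtained from $\rho_{V_i}(h_0)$ by $R'$-linear extension of scalars. Since the characteristic polynomial of a matrix depends only on its entries (which lie in $R \subseteq R'$), we have $\mathrm{Poly}(\rho_{(R' \otimes V)_i}(h_0)) = t_R^{R'}(\mathrm{Poly}(\rho_{V_i}(h_0)))$, which is precisely the required equality $\mathfrak{p}_{R'} \circ d_R^{R'} = t_R^{R'} \circ \mathfrak{p}_R$.

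The third assertion is the main content. Assuming $H$ split, I would first invoke the classical ungraded statement \cite[Lemma 7.3.4]{GP}: extension of scalars induces a bijection $\underline{\mathrm{Irr}}(H) \to \underline{\mathrm{Irr}}(R'H)$ sending $\underline{M}$ to the simple module $R' \otimes_R \underline{M}$. To promote this to the graded setting, I would combine it with the parametrization of simple graded modules by $\underline{\mathrm{Irr}} \times \mathbb{Z}$ (up to shift) recalled in Section~2.1 from \cite{BGS,NVO}, together with the obvious identification $R' \otimes_R (M\langle j\rangle) = (R' \otimes_R M)\langle j\rangle$ of graded $R'H$-modules. The main obstacle is to verify that \emph{every} simple graded $R'H$-module arises as $(R' \otimes_R M)\langle j\rangle$: its underlying ungraded module is simple and, by the split hypothesis, isomorphic to $R' \otimes_R \underline{M}$ for a unique $\underline{M} \in \underline{\mathrm{Irr}}(H)$; the uniqueness of the grading on $\underline{M}$ up to shift then pins down the integer $j$. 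Consequently $d_R^{R'}$ sends the $\mathbb{Z}[q,q^{-1}]$-basis $\{[M] \mid \underline{M} \in \underline{\mathrm{Irr}}(H)\}$ of $\mathcal{R}_0(H)$ bijectively onto the corresponding basis of $\mathcal{R}_0(R'H)$, yielding an isomorphism that preserves isomorphism classes of simple graded modules.
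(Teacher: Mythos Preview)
The paper does not give a proof of this lemma; it is stated as the graded analogue of \cite[Lemma 7.3.4]{GP} and left to the reader. Your argument supplies precisely the expected details: exactness of $R'\otimes_R(-)$ for well-definedness, invariance of characteristic polynomials under base change for the commutative square, and the ungraded bijection of \cite[Lemma 7.3.4]{GP} combined with the uniqueness-up-to-shift of gradings on simples from \cite{BGS,NVO} for the final assertion. This is correct and is the approach the paper implicitly has in mind. One small wording fix: in your last step, the uniqueness of the grading up to shift should be invoked for the simple $R'H$-module $R'\otimes_R\underline{M}$ (i.e.\ the \cite{BGS} result applied over $R'$), not for $\underline{M}$ itself; the conclusion is unaffected.
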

%
%
  
  \subsection{Modular reduction for graded modules}\label{modular}
  
  From now on, we will work in the following setting:
   Let $A$ be an integral domain  and let $H$ be an associative $A$-algebra, finitely generated
  and free over $A$ 
   such that $H$ is $\mathbb{Z}$-graded:
  $$H=\bigoplus_{i\in \mathbb{Z}} H_i.$$
Let $K$ be the field of fractions of $A$ and set $KH:=K\otimes_A H$.   Note that 
 $KH$ is naturally equipped with a $\mathbb{Z}$-grading which comes form the grading of $H$: 
   $$KH=\bigoplus_{i\in \mathbb{Z}} KH_i.$$
Let $\mathcal{O}$ be a valuation ring such that $A \subseteq \mathcal{O} \subseteq K$. Then the following lemma holds.
   
      \begin{lemma}\label{lemmamo}
Let  $V$ be a finite-dimensional graded $KH$-module. 
 There exists a graded  $\mathcal{O}H$-lattice  $\widetilde{V}$ satisfying 
$$
V\cong K\otimes_{\mathcal{O}} \widetilde{V}.
$$
   \end{lemma}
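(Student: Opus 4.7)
The plan is to mimic the ungraded construction of lattices over a valuation ring, but starting from a \emph{homogeneous} $K$-basis of $V$ so that the resulting lattice inherits the grading.

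First, since $V=\bigoplus_{i\in\mathbb{Z}} V_i$ is finite-dimensional over $K$, only finitely many graded pieces $V_i$ are nonzero, and each is a finite-dimensional $K$-vector space. I would pick a $K$-basis $\mathcal{B}_i$ of each $V_i$ and set $\mathcal{B}=\bigcup_i \mathcal{B}_i$; this is a homogeneous $K$-basis of $V$. Let $L_0\subseteq V$ be the $\mathcal{O}$-span of $\mathcal{B}$. By construction, $L_0=\bigoplus_{i\in\mathbb{Z}}(L_0\cap V_i)$ is a graded $\mathcal{O}$-submodule of $V$.

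Next, I would define
$$\widetilde{V}:=(\mathcal{O}H)\cdot L_0\subseteq V,$$
where $\mathcal{O}H=\mathcal{O}\otimes_A H$, viewed as a graded subring of $KH$ through the inclusions $A\subseteq\mathcal{O}\subseteq K$. Since $H$ is free of finite rank over $A$, the algebra $\mathcal{O}H$ is free of finite rank over $\mathcal{O}$, and one can choose an $\mathcal{O}$-basis of $\mathcal{O}H$ consisting of homogeneous elements (by picking a homogeneous $A$-basis of each graded piece $H_i$, which exist because $H=\bigoplus H_i$ as $A$-modules implies each $H_i$ is a direct summand, hence free). Thus $\widetilde{V}$ is $\mathcal{O}$-spanned by the finite set of homogeneous products $h_je_k$, with $h_j$ running through a homogeneous $\mathcal{O}$-basis of $\mathcal{O}H$ and $e_k\in\mathcal{B}$. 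Consequently $\widetilde{V}$ is finitely generated over $\mathcal{O}$, closed under the $\mathcal{O}H$-action, and graded (being generated by homogeneous elements by homogeneous operators). Since $\widetilde{V}\subseteq V$ and $V$ is a $K$-vector space, $\widetilde{V}$ is torsion-free over $\mathcal{O}$, so it qualifies as a graded $\mathcal{O}H$-lattice.

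Finally, I would check the isomorphism $V\cong K\otimes_{\mathcal{O}}\widetilde{V}$. The multiplication map $K\otimes_{\mathcal{O}}\widetilde{V}\to V$, $\lambda\otimes v\mapsto \lambda v$, is a morphism of graded $KH$-modules. Surjectivity is immediate: $L_0\subseteq\widetilde{V}$ and $K\cdot L_0=V$ because $\mathcal{B}$ is a $K$-basis of $V$. Injectivity follows from the fact that $\widetilde{V}$ is torsion-free over $\mathcal{O}$, so the localization $K\otimes_{\mathcal{O}}\widetilde{V}$ embeds into $V$. The only mildly delicate point (and thus the main obstacle) is verifying that the lattice inherits a compatible grading and is finitely generated over $\mathcal{O}$; both issues are resolved simultaneously by insisting on homogeneous bases for $V$ and for $\mathcal{O}H$ at the outset, so the construction proceeds cleanly without invoking any Noetherian hypothesis on $\mathcal{O}$.
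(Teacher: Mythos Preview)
Your proof is correct and follows essentially the same construction as the paper: pick a homogeneous $K$-basis of $V$, a homogeneous $A$-basis of $H$, and let $\widetilde{V}$ be the $\mathcal{O}$-span of all products; this is exactly the paper's $\widetilde{V}=\bigoplus_\ell \widetilde{V}_\ell$ written as $(\mathcal{O}H)\cdot L_0$. The only cosmetic difference is that the paper explicitly invokes the fact that a finitely generated torsion-free module over a valuation ring is free in order to call $\widetilde{V}$ a lattice, whereas you leave this implicit; also, your parenthetical justification that each $H_i$ is free because it is a direct summand of a free $A$-module is not valid over an arbitrary integral domain (it only gives projectivity), but the paper simply assumes an $A$-basis of each $H_i$ exists without comment, so this is a shared implicit hypothesis rather than a defect of your argument.
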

   \begin{proof}
    Assume that $V=\oplus_{j\in \mathbb{Z}} V_j$ is the fixed grading of $V$. For $j\in \mathbb{Z}$, let
     $\{v_1^j,v_2^j,\ldots,v_{r_j}^j\}$ be a $K$-basis of $V_j$. Moreover, for $i \in \mathbb{Z}$, let $\{b_1^i,b_2^i,\ldots,b_{s_i}^i\}$ be an $A$-basis
      of $H_i$.  Then, for  $\ell \in \mathbb{Z}$, let $\widetilde{V}_\ell$ be the $\mathcal{O}$-submodule of $V_\ell$ spanned by the finite set 
   $$\{b_m^i v_n^j\ |\ 1\leq m \leq s_i,\ 1\leq n \leq r_j,\ i+j=\ell\}.$$
     Set $\widetilde{V}:=\bigoplus_{\ell \in \mathbb{Z}}\widetilde{V}_\ell$. 
     Then 
   $ \widetilde{V}$ is a finitely generated graded $\mathcal{O}H$-module.  Since it is contained in a $K$-vector space, it is also torsion-free.
   It is a well-known fact (see, for example, \cite[7.3.5]{GP}) that every finitely generated torsion-free module over a valuation ring is free.
   Thus,  $ \widetilde{V}$ is a graded $\mathcal{O}H$-lattice satisfying $V\cong K\otimes_{\mathcal{O}} \widetilde{V}$,
    as any $\mathcal{O}$-basis of 
      $\widetilde{V}$ is also a  $K$-basis of ${V}$.      
%
%
     \end{proof}
   
   Let $V=\oplus_{j\in \mathbb{Z}} V_j$ and $\widetilde{V}=\bigoplus_{\ell \in \mathbb{Z}}\widetilde{V}_\ell$ be as in the lemma above.
   Let $i \in \mathbb{Z}$. By construction of $\widetilde{V}_i$, there exists a $K$-basis of $V_i$ such that the corresponding matrix representation $\rho_{V_i}: KH_0 \to M_{n_i}(K)$ (where $n_i=\mathrm{dim}_KV_i$) has the property that $\rho_{V_i}(h_0) \in M_{n_i}(\mathcal{O})$ for all $h_0 \in H_0$. 
   We deduce that $\mathfrak{p}_{K}([V])(h_0) \in \mathcal{O}[X]^{\mathbb{Z}}$ for all $h_0 \in H_0$.
   
   The following result is a direct application of the above construction. It uses the fact that the integral closure of $A$ in $K$ is equal to the intersection of all valuation rings $\mathcal{O}$ such that $A \subseteq \mathcal{O} \subseteq K$. For the same result in the ungraded setting, see \cite[Proposition 7.3.8]{GP}.
   
   \begin{Prop}\label{intclos}
   Let $V$ be a finite-dimensional graded $KH$-module and let ${A}^*$ be the integral closure of $A$ in $K$.  Then we have $\mathfrak{p}_{K}([V])(h_0) \in 
   A^* [X]^{\mathbb{Z}}$ for all $h_0 \in H_0$. Thus, the map $\mathfrak{p}_{K}$  is in fact a map $\mathfrak{p}_{K}:\mathcal{R}_0^+ (KH) \to \maps (H_0,A^*[X]^{\mathbb{Z}})$.
   \end{Prop}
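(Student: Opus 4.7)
The plan is to exploit the theorem from commutative algebra, recalled in the statement, that the integral closure $A^*$ of $A$ in $K$ equals the intersection $\bigcap_{\mathcal{O}} \mathcal{O}$ of all valuation rings $\mathcal{O}$ with $A \subseteq \mathcal{O} \subseteq K$. Since the coefficients of the characteristic polynomials $\operatorname{Poly}(\rho_{V_i}(h_0))$ that assemble into $\mathfrak{p}_K([V])(h_0)$ are a priori elements of $K$, and since they are intrinsic invariants of the endomorphisms $\rho_{V_i}(h_0) \in \operatorname{End}_K(V_i)$ (and in particular independent of the choice of $K$-basis of $V_i$), it suffices to prove that each of these coefficients belongs to every such $\mathcal{O}$.

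To carry out this reduction, I would fix an arbitrary valuation ring $\mathcal{O}$ with $A \subseteq \mathcal{O} \subseteq K$ and apply Lemma \ref{lemmamo} to produce a graded $\mathcal{O}H$-lattice $\widetilde{V} = \bigoplus_{\ell \in \mathbb{Z}} \widetilde{V}_\ell$ satisfying $V \cong K \otimes_{\mathcal{O}} \widetilde{V}$. An $\mathcal{O}$-basis of the graded piece $\widetilde{V}_i$ is simultaneously a $K$-basis of $V_i$, and since $H_0$ preserves the grading, the matrix of $\rho_{V_i}(h_0)$ in such a basis has entries in $\mathcal{O}$, as already observed by the authors in the paragraph immediately following the proof of Lemma \ref{lemmamo}. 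Consequently $\operatorname{Poly}(\rho_{V_i}(h_0)) \in \mathcal{O}[X]$ for every $i \in \mathbb{Z}$, and therefore $\mathfrak{p}_K([V])(h_0) \in \mathcal{O}[X]^{\mathbb{Z}}$.

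Putting the two steps together, the coefficients of each entry of $\mathfrak{p}_K([V])(h_0)$ lie in $\bigcap_{\mathcal{O}} \mathcal{O} = A^*$, which is exactly the claim $\mathfrak{p}_K([V])(h_0) \in A^*[X]^{\mathbb{Z}}$; the reformulation as a map $\mathfrak{p}_K : \mathcal{R}_0^+(KH) \to \maps(H_0, A^*[X]^{\mathbb{Z}})$ is then purely a matter of restricting the codomain. I do not expect any substantive obstacle: the essential work has already been absorbed into Lemma \ref{lemmamo}, and what remains is the standard intersection-of-valuation-rings argument familiar from the ungraded case of \cite[Proposition 7.3.8]{GP}, with the only thing to check being the basis-independence of the characteristic polynomials, which is immediate.
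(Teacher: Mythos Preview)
Your proposal is correct and follows exactly the approach the paper outlines: the paper does not give a detailed proof, but states that the result is a direct application of Lemma~\ref{lemmamo} and the ensuing paragraph, together with the fact that $A^*$ is the intersection of all valuation rings $\mathcal{O}$ with $A\subseteq \mathcal{O}\subseteq K$. Your write-up simply fills in those details.
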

   
From now on, we will assume that $A$ is integrally closed in $K$, that is, $A^*=A$.
   Let $\theta : A \to L$ be a  ring homomorphism into a field $L$ such that $L$ is the field of fractions of $\theta (A)$. We obtain an $L$-algebra
    $LH:=L\otimes_A H$, where $L$ is regarded as an $A$-module via $\theta$. We have a canonical grading: 
   $$LH=\bigoplus_{i\in \mathbb{Z}} L H_i.$$


 
Let $\mathcal{O} \subseteq K$ be a valuation ring with  maximal ideal $J(\mathcal{O})$ such that $A\subseteq \mathcal{O}$ and 
   $J(\mathcal{O})\cap A =\textrm{Ker} (\theta)$. The existence of such a ring is guaranteed by a standard result on valuation rings (see, for example, \cite[7.3.5]{GP}). 
  Let $k$ be the residue field of $\mathcal{O}$ and set $\mathcal{O}H:=\mathcal{O}\otimes_A H$ and ${k}H:={k}\otimes_{\mathcal{O}} H$. 
  The restriction of the canonical map $\pi: \mathcal{O} \twoheadrightarrow k$ to $A$ has kernel  $J(\mathcal{O})\cap A =\textrm{Ker} (\theta)$. Since $L$ is the field of fractions of $\theta(A)$,
$L$ can be regarded as a subfield of $k$. One can now identify the graded Grothendieck groups of 
 $LH$ and $kH$ if the following assumption holds (recall that if $LH$ is split, then the map $d_L^k$ of Lemma \ref{whysplit} is an isomorphism which preserves isomorphism classes of simple  graded  modules):

\begin{assump}\label{hyp}
Let $L$ and $k$ be as above. We have that
 \begin{enumerate}[(a)]
\item $LH$ is split, or
\item $L=k$ and $L$ is perfect.
\end{enumerate}
\end{assump}
In particular, following Remark \ref{secondremark}, under the above assumption the graded Brauer-Nesbitt Lemma holds and the map $\mathfrak{p}_L$ is injective.

Now let $V= \bigoplus_{i\in \mathbb{Z}}{V}_i$ be any graded $KH$-module.  By Lemma \ref{lemmamo}, there exists a graded  $\mathcal{O}H$-lattice  $\widetilde{V}=\bigoplus_{i\in \mathbb{Z}}\widetilde{V}_i$ such that
$V\cong K\otimes_{\mathcal{O}} \widetilde{V}.$ The $k$-vector space $k \otimes_{\mathcal{O}} \widetilde{V}$ is a graded $kH$-module via $(\widetilde{v} \otimes 1)(h \otimes 1)=\widetilde{v}h \otimes 1\,(\widetilde{v} \in \widetilde{V},\, h \in H)$, which we call the \emph{modular reduction} of $\widetilde{V}$.  To simplify notation, we shall write
$$K \widetilde{V}:=K\otimes_{\mathcal{O}} \widetilde{V} \quad\text{and}\quad k \widetilde{V}:=k\otimes_{\mathcal{O}} \widetilde{V}.$$

Let $i \in \mathbb{Z}$. Following the remarks after Lemma \ref{lemmamo}, there exists a $K$-basis of $V_i$ such that the corresponding matrix representation $\rho_{V_i}: KH_0 \to M_{n_i}(K)$ (where $n_i=\mathrm{dim}_KV_i$) has the property that $\rho_{V_i}(h_0) \in M_{n_i}(\mathcal{O})$ for all $h_0 \in H_0$. The matrix representation $\rho_{k\widetilde{V_i}}: kH_0 \to M_{n_i}(k)$ afforded by $k\widetilde{V_i}$ is given by 
\begin{equation}\label{repk}\tag{$\star$}
\rho_{k\widetilde{V_i}}(h_0 \otimes 1)=\pi \left(\rho_{V_i}(h_0)\right).
\end{equation}

   \subsection{Graded decomposition map}

The aim of this subsection is to relate the graded representation theory of $LH$ with that of $KH$, in the setting of \S \ref{modular}. Recall that we have fix a grading for the simple $KH$ and $LH$-modules once for all. 
We will define a decomposition map
  $$d^{\operatorname{gr}}_{\theta}: \mathcal{R}_0 (KH) \to \mathcal{R}_0 (L H)$$
  between the associated Gorthendieck groups of finite-dimensional  graded modules.
 
 \begin{Th}\label{uni}
 Assume that $A$ is integrally closed in $K$ and let  $\theta : A \to L$ be a  ring homomorphism into a field $L$ such that $L$ is the field of fractions of $\theta (A)$.
Let $\mathcal{O} \subseteq K$ be a valuation ring with  maximal ideal $J(\mathcal{O})$ such that $A\subseteq \mathcal{O}$ and 
   $J(\mathcal{O})\cap A =\textrm{Ker} (\theta)$.  If Assumption \ref{hyp} is satisfied, then
 the following assertions hold. 
 \begin{enumerate}[(1)]
%
%

  \item    The modular reduction induces a  $\mathbb{N}[q,q^{-1}]$-semigroup homomorphism
       $$d^{\operatorname{gr}}_{\theta}: \mathcal{R}^+_0 (K H) \to  \mathcal{R}^+_0 (LH)$$
           by setting 
    $$\displaystyle d^{\operatorname{gr}}_{\theta}([K\widetilde{V}]):=  [k \widetilde{V}],     $$
    where     $\widetilde{V}$ is a graded $\mathcal{O}H$-lattice and $[k  \widetilde{V}]$ is regarded as an element of 
     $\mathcal{R}_0^+ (LH)$ via the identification with $\mathcal{R}_0^+ (kH)$. 
 \item By Proposition \ref{intclos}, the image of $\mathfrak{p}_K$ is contained in $\maps (H_0,A[X]^{\mathbb{Z}})$. We have a commutative diagram 
  $$\def\commutatif{\ar@{}[rd]|{\circlearrowleft}}
  \xymatrix{ 
    \mathcal{R}_0^+ (KH)    \ar[r] ^{\mathfrak{p}_K \ \ \ \ \ }\ar[d]^{d_{\theta}^{\textrm{gr}}\ \ \ \ \ }  \commutatif &  \maps (H_0,A[X]^{\mathbb{Z}})     \ar[d]^{t_{\theta}} \\
    \mathcal{R}_0^+ (L  H) \ar[r]^{\mathfrak{p}_{L } \ \ \ \ \ } &  \maps (H_0,L  [X]^{\mathbb{Z}})  }$$

 where $t_{\theta}: \maps (H_0,A[X]^{\mathbb{Z}}) \to \maps (H_0, L  [X]^{\mathbb{Z}})$ is the natural map obtained by applying $\theta$ to the coefficients of the polynomials in $A[X]$. 
 \item The map $d^{\operatorname{gr}}_{\theta}$ is the unique map satisfying the commutative diagram in (2); in particular, $d^{\operatorname{gr}}_{\theta}$ does not depend on the choice of $\mathcal{O}$.

 \end{enumerate}

 \end{Th}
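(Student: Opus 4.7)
The plan is to prove assertion~(2) first by a direct matrix-level computation, and then use injectivity of $\mathfrak{p}_L$ (which holds under Assumption~\ref{hyp} by Remark~\ref{secondremark}) to deduce both the well-definedness required in~(1) and the uniqueness stated in~(3).

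For the commutativity in~(2), I would fix a graded $KH$-module $V$, a graded $\mathcal{O}H$-lattice $\widetilde V$ with $K\widetilde V\cong V$ from Lemma~\ref{lemmamo}, and $h_0\in H_0$, $i\in\mathbb{Z}$. Choosing the $K$-basis of $V_i$ to come from an $\mathcal{O}$-basis of $\widetilde V_i$ places $\rho_{V_i}(h_0)$ in $M_{n_i}(\mathcal{O})$, and Proposition~\ref{intclos} further confines the coefficients of $\textrm{Poly}(\rho_{V_i}(h_0))$ to $A$. Applying $\pi$ coefficient-wise and using \eqref{repk} then gives $\textrm{Poly}(\rho_{k\widetilde V_i}(h_0\otimes 1))=\pi(\textrm{Poly}(\rho_{V_i}(h_0)))$; since $\pi|_A$ factors as $\theta$ followed by the inclusion $L\hookrightarrow k$, and Lemma~\ref{whysplit} identifies $\mathcal R_0^+(LH)$ with $\mathcal R_0^+(kH)$ under Assumption~\ref{hyp}(a) (the identification being tautological in case~(b)), this translates into the sought equality $\mathfrak{p}_L([k\widetilde V])=t_\theta(\mathfrak{p}_K([V]))$.

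For the well-definedness in~(1), the formula $d^{\operatorname{gr}}_\theta([V]):=[k\widetilde V]$ is a priori ambiguous because $\widetilde V$ is not unique. However, if $\widetilde V$ and $\widetilde V'$ are two such lattices, the preceding computation yields $\mathfrak{p}_L([k\widetilde V])=\mathfrak{p}_L([k\widetilde V'])$, and injectivity of $\mathfrak{p}_L$ under Assumption~\ref{hyp} forces $[k\widetilde V]=[k\widetilde V']$ in $\mathcal{R}_0^+(LH)$. The remaining assertions are routine: $\widetilde V\oplus\widetilde W$ is a lattice for $V\oplus W$ and modular reduction commutes with direct sums, making $d^{\operatorname{gr}}_\theta$ a semigroup homomorphism, while a grading shift on $V$ lifts to the same shift on $\widetilde V$ and on $k\widetilde V$, proving $\mathbb{N}[q,q^{-1}]$-equivariance.

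For uniqueness in~(3), any map $d'\colon\mathcal R_0^+(KH)\to\mathcal R_0^+(LH)$ making the diagram in~(2) commute must satisfy $\mathfrak{p}_L\circ d'=\mathfrak{p}_L\circ d^{\operatorname{gr}}_\theta$, and injectivity of $\mathfrak{p}_L$ then forces $d'=d^{\operatorname{gr}}_\theta$; independence of the choice of $\mathcal{O}$ is an immediate consequence. The only real obstacle is the bookkeeping of the two stacked base changes $A\xrightarrow{\theta}L\hookrightarrow k$ together with the identification $d_L^k$ of Lemma~\ref{whysplit}: once one insists on reading~(2) as an equality in $\maps(H_0,L[X]^{\mathbb{Z}})$ rather than in $\maps(H_0,k[X]^{\mathbb{Z}})$, the whole argument reduces to \eqref{repk} and the graded Brauer-Nesbitt lemma.
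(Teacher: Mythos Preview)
Your proof is correct and follows essentially the same route as the paper: establish the commutativity $\mathfrak{p}_L([k\widetilde V])=t_\theta(\mathfrak{p}_K([V]))$ via \eqref{repk}, then invoke the injectivity of $\mathfrak{p}_L$ (guaranteed by the graded Brauer--Nesbitt lemma under Assumption~\ref{hyp}) to obtain both well-definedness in~(1) and uniqueness in~(3). The paper phrases well-definedness slightly more generally as the implication $[K\widetilde V]=[K\widetilde W]\Rightarrow[k\widetilde V]=[k\widetilde W]$ rather than mere independence of the lattice for a fixed $V$, but your commutativity formula already covers this since its right-hand side depends only on the class $[V]$.
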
 
 \begin{proof}
For (1), we need to show that $\displaystyle d^{\operatorname{gr}}_{\theta}$ is well-defined. Thus, 
 we need to show that if $\widetilde{V}$ and $\widetilde{W}$ are graded $\mathcal{O}H$-lattices such that
 $[K\widetilde{V}]=[K\widetilde{W}]$, then  $[k\widetilde{V}] = [k\widetilde{W}] $.
Note that Equation (\ref{repk}) implies that 
$$p_L ([k\widetilde{V}])=t_{\theta} ( p_K ([K\widetilde{V}]))=t_{\theta} ( p_K ([K\widetilde{W}]))=p_L ([k\widetilde{W}]).$$ 
As mentioned in the previous subsection, under Assumption \ref{hyp} the graded Brauer-Nesbitt Lemma holds and the map   $p_L$ is injective. Hence, we deduce that $[k\widetilde{V}] = [k\widetilde{W}] $, as desired.
Thus,  $\displaystyle d^{\operatorname{gr}}_{\theta}$ is well-defined and  (2) is also proved. Finally, since $p_L$ is injective, there is at most one map 
     $\displaystyle d^{\operatorname{gr}}_{\theta}$ that makes the diagram in (2) commutative. This proves (3). 
 \end{proof}
 
 The map $d^{\operatorname{gr}}_{\theta}$ will be called the \emph{graded decomposition map} associated with the specialization $\theta :A \to L$. We can now easily extend its
  definition to the Grothendieck groups $\mathcal{R}_0 (KH)$ and $\mathcal{R}_0 (LH)$. 

 Let us consider a simple $KH$-module $\underline{V}$. Then $V$ is graded.  Let $\widetilde{V}$ be a graded  $\mathcal{O}H$-lattice such that $V \cong K\widetilde{V}$. Then we have
  $$\begin{array}{rclll}
     d^{\operatorname{gr}}_{\theta } ([K\widetilde{V}])  &=&        [k\widetilde{V}] & =&
 \displaystyle \sum_{\underline{M}\in \underline{\textrm{Irr}} (L  H)} \sum_{n\in \mathbb{Z}} [k\widetilde{V} :M\langle n\rangle]^{\textrm{gr}} [M\langle n\rangle] \\
 & &  & =&\displaystyle  \sum_{\underline{M}\in \underline{\textrm{Irr}} (L  H)}   \left( \sum_{n\in \mathbb{Z}} q^n  [k\widetilde{V} :M\langle n\rangle]^{\textrm{gr}} \right) [M]
   \end{array}
   $$ 
 We can then define a \emph{graded decomposition matrix} with coefficients in $\mathbb{N}[q,q^{-1}]$ as follows:
 $$D^{\operatorname{gr}}_{\theta} (q):= \left(  \sum_{n\in \mathbb{Z}} q^n  [k\widetilde{V}:M\langle n\rangle]^{\textrm{gr}}  \right)_{\underline{V}\in \underline{\textrm{Irr}} (KH),\, \underline{M} \in \underline{\textrm{Irr}} (LH)}.$$
 One can see that $D^{\operatorname{gr}}_{\theta} (1)$ is the usual decomposition matrix  for $H$  associated with the specialization $\theta$. 
 
    
  \subsection{Factorization property of the graded decomposition map}
  
 We wish to describe a factorization result which is  a graded analogue of \cite[Proposition 2.6]{G}.
  Assume that $A$ is an integrally closed ring.
  Let $\theta : A \to L$ and $\theta' : A \to L'$ be specializations such that $L$ is the field of fractions of $\theta (A)$ and 
   $L'$ is the field of fractions of $\theta ' (A)$.  Let us assume that 
   $$\textrm{Ker} (\theta) \subseteq \textrm{Ker} (\theta ').$$
   Then we can define a ring homomorphism $\phi: \theta (A) \to L'$ such that $\phi \, (\theta(a))=\theta'(a)$ for all $a \in A$, that is, $\phi \circ \theta=\theta'$.  
   Note that 
  $ \phi \, (\theta(A))=\theta ' (A)$ and thus $L'$ is the field of fractions of $\phi \, (\theta(A))$. We assume in addition that $\phi$ can be extended to a  ring $B$ that is integrally closed in $L$ (which happens, for example, when $\theta (A)$ is integrally closed in $L$ and $B=\theta (A)$). 
  
 Now assume that the algebras $LH$ and $L'H$ are split. Using our previous results, we can define three decomposition maps:
     $$d^{\operatorname{gr}}_{\theta}: \mathcal{R}^+_0 (KH) \to \mathcal{R}^+_0 (L H),\qquad d^{\operatorname{gr}}_{\theta'}: \mathcal{R}^+_0 (KH) \to \mathcal{R}^+_0 (L' H), $$
  $$d^{\operatorname{gr}}_{\theta,\theta '}: \mathcal{R}^+_0 (LH) \to \mathcal{R}^+_0 (L' H),$$
      where the last map is defined with respect to $t_{\theta,\theta'}:\, \maps ( H_0,B [X]^{\mathbb{Z}})\to   \maps (H_0,L' [X]^{\mathbb{Z}})$, which is the map obtained by applying $\phi$ to the coefficients of the polynomials in $B [X]$.
 \begin{Prop}\label{facto} The following diagram is commutative:
$$
 \xymatrix @R=1cm @C=1cm{ \mathcal{R}^+_0 (KH)  \ar[rr]^{d^{\textrm{gr}}_{{\theta '}}  } \ar[rd]^{d^{\textrm{gr}}_{{\theta }}} && \mathcal{R}^+_0(L 'H ) \\ & \mathcal{R}^+_0 (L {H}  )\ar[ru]^{d^{\textrm{gr}}_{{\theta,\theta '}}  }  }
$$
In particular, if $D_{\theta}^{\textrm{gr}} (q)$, $D_{\theta'}^{\textrm{gr}} (q)$   and  $D_{\theta,\theta'}^{\textrm{gr}} (q)$ are the associated graded decomposition matrices,  then
$$D_{\theta'}^{\textrm{gr}} (q)=D_{\theta,\theta'}^{\textrm{gr}}  (q)\, D_{\theta}^{\textrm{gr}} (q).$$
 \end{Prop}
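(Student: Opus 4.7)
The plan is a straightforward diagram chase using Theorem \ref{uni}(2) applied three times, closed by the injectivity of $\mathfrak{p}_{L'}$ guaranteed under the splitness assumptions via Remark \ref{secondremark} and Assumption \ref{hyp}.

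First, I would record the key compatibility between the coefficient maps: since $\phi \circ \theta = \theta'$, applying these maps coefficient-wise to polynomials in $A[X]$ gives $t_{\theta,\theta'} \circ t_{\theta} = t_{\theta'}$ as maps $\maps(H_0, A[X]^{\mathbb{Z}}) \to \maps(H_0, L'[X]^{\mathbb{Z}})$. Here one has to observe that $t_\theta$ sends $A[X]^{\mathbb{Z}}$ into $\theta(A)[X]^{\mathbb{Z}} \subseteq B[X]^{\mathbb{Z}}$, which is the required domain for $t_{\theta,\theta'}$ (this uses the hypothesis that $\phi$ extends to the integrally closed ring $B \supseteq \theta(A)$ in $L$, so that $d^{\operatorname{gr}}_{\theta,\theta'}$ is defined via Theorem \ref{uni} with $B$ playing the role of $A$).

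Next, I would stack the three commutative squares from Theorem \ref{uni}(2): starting from any $[V] \in \mathcal{R}^+_0(KH)$, one has
\begin{equation*}
\mathfrak{p}_{L'}\bigl(d^{\operatorname{gr}}_{\theta,\theta'}(d^{\operatorname{gr}}_{\theta}([V]))\bigr)
= t_{\theta,\theta'}\bigl(\mathfrak{p}_L(d^{\operatorname{gr}}_{\theta}([V]))\bigr)
= t_{\theta,\theta'}\bigl(t_{\theta}(\mathfrak{p}_K([V]))\bigr)
= t_{\theta'}\bigl(\mathfrak{p}_K([V])\bigr)
= \mathfrak{p}_{L'}\bigl(d^{\operatorname{gr}}_{\theta'}([V])\bigr).
\end{equation*}
Because $L'H$ is split, Assumption \ref{hyp}(a) holds for the specialization $\theta'$, so the graded Brauer–Nesbitt Lemma applies and $\mathfrak{p}_{L'}$ is injective. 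Cancelling $\mathfrak{p}_{L'}$ gives $d^{\operatorname{gr}}_{\theta'}([V]) = d^{\operatorname{gr}}_{\theta,\theta'} \circ d^{\operatorname{gr}}_{\theta}([V])$, which is the desired commutativity of the triangle.

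Finally, the matrix identity $D^{\operatorname{gr}}_{\theta'}(q) = D^{\operatorname{gr}}_{\theta,\theta'}(q)\, D^{\operatorname{gr}}_{\theta}(q)$ follows by reading off the commutative triangle on the $\mathbb{Z}[q,q^{-1}]$-basis $\{[V] \mid \underline{V} \in \underline{\textrm{Irr}}(KH)\}$ of $\mathcal{R}_0(KH)$: the $q$-coefficients expressing $d^{\operatorname{gr}}_{\theta'}([V])$ in the basis $\{[M'] \mid \underline{M'} \in \underline{\textrm{Irr}}(L'H)\}$ are obtained by expanding first via $d^{\operatorname{gr}}_{\theta}$ in the basis indexed by $\underline{\textrm{Irr}}(LH)$, then applying $d^{\operatorname{gr}}_{\theta,\theta'}$, which corresponds exactly to matrix multiplication once one recalls that both decomposition maps are $\mathbb{N}[q,q^{-1}]$-semigroup homomorphisms. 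I do not anticipate a real obstacle: the only subtlety is bookkeeping, ensuring that the ring homomorphism $\phi$ genuinely extends to $B$ and that $\mathfrak{p}_L$ takes values in $\maps(H_0, B[X]^{\mathbb{Z}})$ so that the middle square involving $d^{\operatorname{gr}}_{\theta,\theta'}$ is the one furnished by Theorem \ref{uni}(2); this is precisely the content of the integral-closure hypothesis imposed on $B$.
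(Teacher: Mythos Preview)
Your proof is correct and follows essentially the same approach as the paper: both stack the commutative squares from Theorem \ref{uni}(2), use $t_{\theta,\theta'}\circ t_\theta = t_{\theta'}$, and then conclude $d^{\mathrm{gr}}_{\theta'} = d^{\mathrm{gr}}_{\theta,\theta'}\circ d^{\mathrm{gr}}_{\theta}$. The only cosmetic difference is that the paper invokes the uniqueness statement Theorem \ref{uni}(3) for the final step, whereas you invoke injectivity of $\mathfrak{p}_{L'}$ directly---but since Theorem \ref{uni}(3) is itself proved via that injectivity, the arguments are equivalent.
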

 \begin{proof}
 
 We have the following commutative diagrams (note that $t_{\theta}$ takes its values in  $\maps (H_0,B  [X]^{\mathbb{Z}})$):
 
   $$\def\commutatif{\ar@{}[rd]|{\circlearrowleft}}
  \xymatrix{ 
    \mathcal{R}_0^+ (KH)    \ar[r] ^{\mathfrak{p}_K \ \ \ \ \ }\ar[d]^{d_{\theta}^{\textrm{gr}}\ \ \ \ \ }  \commutatif &  \maps (H_0,A[X]^{\mathbb{Z}})   \ar[d]^{t_{\theta}  }     \\
    \mathcal{R}_0^+ (L  H) \ar[r]^{\mathfrak{p}_{L } \ \ \ \ \ }\ar[d]^{d_{\theta,\theta'}^{\textrm{gr}}\ \ \ \ \ }  \commutatif  &  \maps (H_0,B  [X]^{\mathbb{Z}}) \ar[d]^{t_{\theta,\theta'} }   \\
   \mathcal{R}_0^+ (L'  H) \ar[r]^{\mathfrak{p}_{L' } \ \ \ \ \ } &  \maps (H_0,L'  [X]^{\mathbb{Z}})  
   }
    $$
We have $t_{\theta'}=t_{\theta,\theta'}\circ t_{\theta}$. Thus, we
     obtain a commutative diagram:
    $$\def\commutatif{\ar@{}[rd]|{\circlearrowleft}}
  \xymatrix{ 
    \mathcal{R}_0^+ (KH)    \ar[r] ^{\mathfrak{p}_K \ \ \ \ \ }\ar[d]^{d_{\theta,\theta'}^{\textrm{gr}} \circ \, d_{\theta}^{\textrm{gr}}  \ \ \ \ \ }  \commutatif &  \maps (H_0,A[X]^{\mathbb{Z}})     \ar[d]^{t_{\theta'}} \\
    \mathcal{R}_0^+ (L'  H) \ar[r]^{\mathfrak{p}_{L' } \ \ \ \ \ } &  \maps (H_0,L'  [X]^{\mathbb{Z}})  }$$
By Theorem \ref{uni} $(3)$, we conclude that $d_{\theta'}^{\textrm{gr}}=d^{\textrm{gr}}_{{\theta,\theta '}} \circ d_{\theta}^{\textrm{gr}}$.
 \end{proof}

  \section{Open problems on Hecke algebras}\label{3.2}

The above results induce interesting questions on the representation theory of Hecke algebras.  Related problems have already been suggested in the introduction of \cite{AJL}. We
will discuss  here another point of view.

Let $l,\,n\in \mathbb{N}$. Let $R$ be a ring and let $(v_1,\ldots,v_l,v)$ be a collection of invertible elements in $R$.   Let $\mathcal{H}_{R}$ be the associated  Ariki-Koike algebra over $R$ with
\begin{itemize}
\item generators :  $T_0$, $T_1$, \ldots, $T_{n-1}$.
\item relations : $(T_0-v_1)\cdots (T_0-v_l)=
 (T_i-v)(T_i+1)=0$ for all $i=1,\ldots,n-1$, together with the braid relations of type $B_n$. 
\end{itemize}

Assume first that $R=\mathbb{C}$, $v=\textrm{exp}(2\pi i/e)$ and $v_j=v^{s_j}$ for all $j=1,\ldots,l$ for some $e>1$ and some ${\bs}=(s_1,\ldots,s_l)\in \mathbb{Z}^l$. 
Note that  the associated Ariki-Koike algebra $\mathcal{H}_{\mathbb{C}}$ depends only on $e$ and on the class of ${\bs}$ modulo the natural action of the affine symmetric group $\widehat{\mathfrak{S}}_l$ (see, for example, \cite[\S 6.2.9]{GJ}). We denote by $\widetilde{\bs}$ the class of $\bs$ modulo this action. 
In \cite{BK1, BK2} Brundan and Kleshchev have shown that 
  the algebra $\mathcal{H}_{\mathbb{C}}$   can be endowed with a $\mathbb{Z}$-grading. Using a grading on the usual Specht modules \cite{BKW} (see also \cite{Huma}), 
  a   graded decomposition matrix $D_{\widetilde{\bs}} (q)$ can  be then  defined by considering the graded composition series of these modules.
  
    On the other hand, let us consider the quantum affine algebra $\mathcal{U}_q (\widehat{\mathfrak{sl}}_e)$ of type $A$. There is an action of $\mathcal{U}_q (\widehat{\mathfrak{sl}}_e)$  on the Fock space $\mathcal{F}_{\bs}$ parametrized by $\bs=(s_1,\ldots, s_l) \in \mathbb{Z}^l$ which allows an explicit realization of the simple highest weight modules. The associated canonical bases (in the sense of Kashiwara-Lusztig) can be then encoded in a matrix $\mathcal{D}_{\bs} (q)$ with coefficients in $\mathbb{N}[q,q^{-1}]$. More importantly, in general we have 
  $\mathcal{D}_{\bs} (q)\neq \mathcal{D}_{\bs '} (q)$   if $\bs\neq \bs'$, even when $\bs$ and $\bs'$ are in the same class modulo the action of the affine symmetric group $\widehat{\mathfrak{S}}_l$.
  
Let us fix  a class $\widetilde{\bs}$ of $\mathbb{Z}^l$ modulo   $\widehat{\mathfrak{S}}_l$. The main result of Brundan and Kleshchev   asserts that for a special choice of an element $\bs$ in the class
$\widetilde{\bs}$ (corresponding to the case where the Fock space may be seen as a tensor product of level one Fock spaces), 
  $\mathcal{D}_\bs (q)$  coincides with the graded decomposition matrix   $D_{\widetilde{\bs}} (q)$. So it is natural to  ask what happens with the other matrices 
    $\mathcal{D}_{\bs'} (q)$ ,  where $\bs' \in \widetilde{\bs}$ (that is, $\bs' \equiv \bs\ (\textrm{mod } \widehat{\mathfrak{S}}_l)$) and $\bs'\neq \bs$. 
  
   Let us consider this problem using the concept of specialization. 
 A factorization result  for the matrices $\mathcal{D}_{\bs} (q)$ has been obtained in \cite{AJL} for arbitrary $\bs$. Set now $A:=\mathbb{C}[v^{\pm 1},v_1^{\pm 1},\ldots,v_l^{\pm 1}]$, where $v$, $v_1$, \ldots, $v_l$  are indeterminates, and assume that there exists a $\mathbb{Z}$-grading on the Ariki-Koike
algebra $\mathcal{H}_{A}$. If we consider the specialization $\theta : A \to \mathbb{C}$ such that 
 $\theta (v)=\textrm{exp}(2\pi i/e)$ and $\theta (v_j)=\textrm{exp}(2\pi is_j/e)$, the results of this note imply the existence of a ``canonical'' graded decomposition matrix. Moreover, this matrix verifies a factorization result (Proposition \ref{facto}), which  may be viewed as an analogue of the one obtained  in \cite{AJL} for the matrices $\mathcal{D}_{\bs} (q)$.  This in turn suggests the existence of {several gradings} on 
$\mathcal{H}_{A}$, each yielding a canonical graded decomposition matrix which corresponds to one of the $\mathcal{D}_\bs (q)$'s. 

\vspace{1cm}
\noindent {\bf Addresses}\\

\noindent \textsc{Maria Chlouveraki}\footnote{The research project is implemented within the framework of the Action ``Supporting Postdoctoral Researchers'' of the Operational Program
``Education and Lifelong Learning'' (Action's Beneficiary: General Secretariat for Research and Technology), and is co-financed by the European Social Fund (ESF) and the Greek State.}, Laboratoire de Math\'ematiques,
UVSQ,
B\^atiment Fermat,
45 Avenue des Etats-Unis,
78035 Versailles  FRANCE\\ \emph{maria.chlouveraki@ed.ac.uk}\\

\noindent \textsc{Nicolas Jacon}, UFR Sciences et Techniques,
16 Route de Gray,
25030 Besan\c con
FRANCE\\  \emph{njacon@univ-fcomte.fr}

\end{document}